\def\a{\alpha}
\def\om{\omega}
\def\vs{\vskip .6cm}
\def\.{\cdot}
\def\n{\nabla}
\def\t{\tilde}
\def\beq{\begin{equation}}
\def\eeq{\end{equation}}
\def\bea{\begin{eqnarray*}}
\def\eea{\end{eqnarray*}}
\def\ba{\begin{array}}
\def\ea{\end{array}}
\def\k{\kappa}
\def\r{\end{proof}}
\def\res{\arrowvert}
\def\ci{{\mathcal C}^\infty}
\def \RM{\mathbb{R}}
\def \ZM{\mathbb{Z}}
\def \CM{\mathbb{C}}
\def \SM{\mathbb{S}}
\def\es{\,\lrcorner\,}
\def\id{\mathrm{id}}
\def\Lie{{\mathcal L}}
\def\U{\mathrm{U}}
\def\rk{\mathrm{rk}}
\def\z{\mathfrak z}
\def\g{\mathfrak g}
\def\t{\mathfrak t}
\def\k{\mathfrak k}
\def\Ker{\mathrm{Ker}}
\def\sp{\mathrm{Span}}
\def\T{\mathrm{T}}
\newtheorem{ede}{Definition}[section]
\newtheorem{epr}[ede]{Proposition}
\newtheorem{ath}[ede]{Theorem}
\newtheorem{elem}[ede]{Lemma}
\newtheorem{ere}[ede]{Remark}
\newtheorem{ecor}[ede]{Corollary}
\title[Homogeneous locally conformally K\"ahler manifolds]{Homogeneous locally conformally K\"ahler manifolds}
\author{Andrei Moroianu}
\author{Liviu Ornea}
\thanks{ Both authors were partially supported by LEA Math-Mode. A.M.  was partially supported by the contract ANR-10-BLAN 0105 ``Aspects Conformes
de la G\'eom\'etrie". L.O. thanks the Laboratoire de Math\'ematiques,  Universit\'e de Versailles  for hospitality during the
preparation of this work. He was also partially supported by CNCS UEFISCDI, project number PN-II
-ID-PCE-2011-3-0118.}
\address{Universit\'e de Versailles-St Quentin, Laboratoire de Math\'ematiques,
UMR 8100 du CNRS, 45 avenue des
\'Etats-Unis, 78035 Versailles, France} 
\email{andrei.moroianu@math.cnrs.fr}
\address{Univ. of Bucharest, Faculty of Mathematics,
14 Academiei str, 70109 Bucharest, Romania, and 
Institute of Mathematics ``Simion Stoilow" of the Romanian Academy, 21, Calea Grivitei str.,
010702-Bucharest, Romania.}
\email{lornea@fmi.unibuc.ro, Liviu.Ornea@imar.ro}
\begin{document}

\begin{abstract} It is known that automorphism  group $G$ of a compact homogeneous locally conformally K\"ahler manifold $M=G/H$ has at least a $1$-dimensional center. We prove that the center of $G$ is at most $2$-dimensional, and that if its dimension is $2$, then $M$ is Vaisman and isometric to a mapping torus of an isometry of a homogeneous Sasakian manifold. 

\vs

\noindent 2000 {\it Mathematics Subject Classification}: Primary 53C15, 53C25.

\medskip
\noindent{\it Keywords:}  locally conformally  K\"ahler
manifold, homogeneous manifold, Vaisman manifold, Killing vector field, holomorphic vector field.
\end{abstract}

\maketitle

\section{Introduction}

Locally conformally K\"ahler (LCK in short) manifolds are Hermitian manifolds  endowed with a closed one-form $\theta$ called Lee form, such that their fundamental 2-form $\om$ satisfies the integrability condition $d\om=-2\theta\wedge\om$, see \cite{do} and the next section for precise definitions.

Such a structure becomes interesting especially on compact manifolds which are known to not be of K\"ahler type. Indeed, almost all non-K\"ahler compact surfaces are LCK, see \cite{be} and \cite{br}. Higher dimensional examples are the Hopf manifolds, \cite{ovsh}, and the Oeljeklaus-Toma manifolds, \cite{ot}. The simplest example of Hopf manifold is $H_n:=\big(\CM^n\setminus \{0\}\big)/\Gamma$, where $\Gamma\simeq\ZM$ is generated by the transformation $(z_i)\mapsto(2z_i)$, endowed with the $\Gamma$-invariant metric $|z|^{-2}\sum dz_i\otimes dz_i$, and with Lee form $\theta=d\log|z|$. More generally, Hopf manifolds are quotients  $\big(\CM^n\setminus \{0\}\big)/\Gamma$ with $\Gamma\simeq \ZM$ generated by the action of an invertible linear (not necessarily diagonal) operator $A$ with eigenvalues of absolute values strictly larger than 1. All Hopf manifolds are diffeomorphic to $\SM^1\times \SM^{2n-1}$, but their complex structure depends upon the operator $A$.

Most known examples of LCK manifolds satisfy a stronger condition: the closed one-form in the definition is parallel with respect to the Levi-Civita connection of the metric. They are called Vaisman manifolds (although I. Vaisman introduced them as ``generalized Hopf manifolds'').  When compact, Vaisman manifolds have a topology quite different from the K\"ahler ones, \cite{ovtop}, \cite{vai}.

Compact Vaisman manifolds are closely related to Sasakian manifolds: they are mapping tori over the circle with fibre a Sasakian manifold, \cite{ovst}. Among the Hopf manifolds, only the ones associated to a diagonal operator, like $H_n$, are Vaisman, \cite{ovsh}.

Unlike   compact homogeneous K\"ahler manifolds which are fully classified \cite{mat}, a structure theorem for compact homogeneous LCK manifolds is still lacking and only informations about particular classes are available. For example, combining \cite{ovst} with \cite{vai} one easily proves that compact homogeneous Vaisman manifolds are mapping tori over the circle with fibre a compact homogeneous Sasakian manifold (and these ones are total spaces of Boothby-Wang fibrations over compact homogeneous K\"ahler manifolds). Also, locally homogeneous LCK manifolds are treated in \cite{haka}.

In this note, we discuss compact homogeneous LCK manifolds according to the dimension of the center of their group of holomorphic isometries and find that if this dimension is $2$, then the manifold is Vaisman. 

The proof goes roughly as follows. If the center of the automorphism group of $M$ has dimension at least 2, one can find a holomorphic Killing vector field $\xi$ on which the Lee form $\theta$ vanishes identically. Using the compactness and homogeneity of $M$ one can show that up to a constant factor $\theta$ equals the metric dual of $J\xi$ (in particular this shows that the dimension of the center is exactly 2). The orbits of $\xi$ are closed circles of constant length so the orbit space is a smooth Riemannian manifold $N$ such that the projection $M\to N$ is a Riemannian submersion. The Lee form projects to a closed 1-form on $N$. Each integral distribution of its kernel turn out to be a homogeneous K\"ahler manifold $P$. Moreover, the second central Killing vector field on $M$ defines a holomorphic vector field on $P$, which by homogeneity has to be parallel. Translating this back to $M$ shows that the Lee form $\theta$ is parallel.

{\sc Acknowledgment.} We would like to thank Paul Gauduchon for many enlightening discussions during the preparation of this work.

\section{Preliminaries}

On every Riemannian manifold $(M,g)$ the metric $g$ defines isomorphisms inverse to each other $\T M\ni X\mapsto X^\flat\in \T ^*M$ and $\T ^*M\ni\alpha\mapsto \alpha^\sharp\in \T M$ by 
$$X^\flat(Y):= g(X,Y), \qquad g(\alpha^\sharp,Y):=\alpha(Y)$$
for all $Y\in \T M$. If $J$ is an almost Hermitian structure on $M$ ({\em i.e.} a skew-symmetric endomorphism of $\T M$ of square $-\id$), we also denote by $J$ the complex structure on $\T ^*M$ induced by the above isomorphisms:
$$J(X^\flat):=(JX)^\flat.$$

An almost Hermitian manifold $(M,g,J)$ is called Hermitian if $J$ is integrable, {\em i.e.} if the Nijenhuis tensor of $J$ defined by
$$N^J(X,Y):=[X,Y]+J[X,JY]+J[JX,Y]-[JX,JY],\qquad\forall\ X,Y\in\ci(\T M)$$
vanishes. Since $N^J(X,Y)=J(\Lie_X J)Y-(\Lie_{JX}J)Y$ for all vector fields $X,Y$, it follows that $J$ is integrable if and only if 
\beq\label{int}J\Lie_X J=\Lie_{JX}J,\qquad\forall\ X\in\ci(\T M).
\eeq
An almost Hermitian manifold $(M,g,J)$ is called K\"ahler if $J$ is parallel with respect to the Levi-Civita connection $\nabla$ of $g$. This is equivalent to the fact that $J$ is integrable and the fundamental 2-form $\om:=g(J\cdot,\cdot)$ is closed.

A vector field $X$ on an almost Hermitian manifold $(M,g,J)$ is called {\em holomorphic} if $\Lie_XJ=0$. By \eqref{int}, if $J$ is integrable then $X$ is holomorphic if and only if $JX$ is holomorphic. The following well known result will be used in the sequel and hence we include a proof.

\begin{elem}\label{hv} A vector field $X$ on a K\"ahler manifold $(M,g,J)$ is holomorphic if and only if $\nabla_{JY}X=J(\nabla_YX)$ for any vector field $Y$ on $M$.
\end{elem}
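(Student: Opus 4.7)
The plan is to compute $(\Lie_X J)(Y)$ directly in terms of the Levi-Civita connection, exploiting the two Kähler features that are available: torsion-freeness of $\nabla$ and the parallelism $\nabla J=0$.

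First I would unpack the Lie derivative of an endomorphism: by definition,
$$(\Lie_X J)(Y)=\Lie_X(JY)-J\Lie_XY=[X,JY]-J[X,Y].$$
Since $\nabla$ is torsion-free, I can replace each Lie bracket by $[U,V]=\nabla_U V-\nabla_V U$. This turns the right-hand side into
$$\nabla_X(JY)-\nabla_{JY}X-J\nabla_XY+J\nabla_YX.$$

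Next I would use $\nabla J=0$, which gives $\nabla_X(JY)=J\nabla_XY$, so the two $\nabla_X$ terms cancel and I obtain the clean identity
$$(\Lie_X J)(Y)=J\nabla_Y X-\nabla_{JY}X.$$
Both implications of the lemma then fall out immediately: $\Lie_X J=0$ holds iff the right-hand side vanishes for every $Y$, i.e.\ iff $\nabla_{JY}X=J(\nabla_Y X)$ for all $Y$.

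There is no real obstacle here; the only point worth double-checking is the sign convention for $\Lie_X J$ on an endomorphism, and the fact that the Kähler hypothesis is used precisely to eliminate the $\nabla_X$-terms (integrability alone would not suffice, but that is not needed since the lemma is stated under the Kähler assumption).
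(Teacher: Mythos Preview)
Your proof is correct and follows exactly the same route as the paper: expand $(\Lie_X J)Y=[X,JY]-J[X,Y]$ via the torsion-free Levi-Civita connection, use $\nabla J=0$ to cancel the $\nabla_X$-terms, and read off the identity $(\Lie_X J)Y=J\nabla_Y X-\nabla_{JY}X$. The paper's proof is just a terser version of your computation.
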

\begin{proof} We have
\begin{equation*}
\begin{split}(\Lie_XJ)Y&=[X,JY]-J[X,Y]=\nabla_X(JY)-\nabla_{JY}X-J(\nabla_XY)+J(\nabla_YX)\\
&=-\nabla_{JY}X+J(\nabla_YX).\end{split}
\end{equation*} 
\end{proof}

A Hermitian manifold $(M,g,J)$ is called locally conformally K\"ahler (in short LCK) if the fundamental 2-form $\om:=g(J\cdot,\cdot)$ satisfies
$$d\om=-2\theta\wedge\om$$
for some closed 1-form on $M$ called the Lee form\footnote{The other convention used for this definition is $d\om=\theta\wedge\om$, but then \eqref{lck} looks more complicated.}. Since $J$ is integrable, this readily implies 
\beq\label{lck}
\nabla_X\om=\theta\wedge JX^\flat+J\theta\wedge X^\flat,\qquad\forall\ X\in \T M.
\eeq

A LCK manifold $(M,g,J)$ is called Vaisman if the Lee form $\theta$ is parallel with respect to the Levi-Civita connection of $g$.

\section{Homogeneous LCK manifolds}

Let now $(M,g,J,\omega,\theta)$ be a compact homogeneous LCK manifold. This means that we assume the existence of a compact, connected  Lie group $G$ acting effectively and transitively on $M$ preserving  both the metric $g$ \emph{and} the complex structure $J$. Consequently, $\omega$ and $\theta$ are also preserved. In particular, the length of $\theta$ is constant. We will assume from now on that $M$ is not K\"ahler, that is $\theta\ne 0$. By a constant rescaling of the metric one can assume that $\theta$ has unit length.

Let $H$ be the isotropy subgroup of the action of $G$ at some point of $M$ and write $M=G/H$. 

\begin{ere}{\rm We may suppose that $H$ is connected. Indeed, if $H$ is not connected, we can take $H_0$, its connected component of the identity and work with $G/H_0$ which finitely covers $G/H$.}
\end{ere}

No complete classification of homogeneous LCK manifolds is available up to now. A natural example is the diagonal Hopf manifold which can be described as $H_n=\big({\SM}^1\times \U(n)\big)/\U(n-1)$, biholomorphic to the Vaisman manifold $\SM^1\times \SM^{2n-1}$ (with the LCK structure defined in the introduction). On the other hand, the Inoue surfaces $S_M$ and their generalizations, the Oeljeklaus-Toma manifolds, are solvmanifolds and, respectively, homogeneous manifolds, but are not LCK homogeneous (their group of biholomorphisms is zero-dimensional, see \cite{be}, \cite{kas}). 

Any element $a$ of the Lie algebra $\g=L(G)$ induces a fundamental vector field $X^a$ on $M$ by the formula
$$X^a_x=\frac{d}{dt}\big\arrowvert_0 \exp(ta)\cdot x,\qquad x\in M.$$
Since its flow is made by left translations with elements of $G$, $X^a$ is a Killing field on $M$. Note that $X^a$ is the projection on $M$ of the {\em right-invariant} vector field on $G$ induced by $a\in\g$, so the bracket of the vector fields $X^a$ and $X^b$ on $M$ is the opposite of the bracket of $a$ and $b$ in $\g$: $[X^a,X^b]=-X^{[a,b]}$. 

Keeping this in mind, we shall identify from now on the elements of $\g$ with the fundamental fields they induce on $M$ and hence denote them as $X,Y$ etc.

Let $\z$ be the center of $\g$. Our results depend upon the dimension of the center. In the following, we discuss the possibilities that can occur. To begin with, one easily proves: 

\begin{elem}{\rm (\cite{paul})}\label{lemap}
For any $X\in\g$, $\theta(X)$ is constant on $M$ and $\theta\perp [\g,\g]$. In particular, $\dim \z\geq 1$.
\end{elem}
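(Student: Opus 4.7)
The plan is to extract everything from the $G$-invariance of $\theta$. First I would observe that since $G$ preserves both $g$ and $J$, it also preserves the fundamental form $\omega = g(J\cdot,\cdot)$ and hence $d\omega = -2\theta\wedge\omega$. Because $\omega$ is non-degenerate, the Lee form $\theta$ is uniquely determined by this identity (given $\omega$), so $G$ preserves $\theta$ as well. Infinitesimally this reads $\Lie_X\theta = 0$ for every fundamental vector field $X$ coming from $\g$.

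The first assertion then drops out of Cartan's formula combined with $d\theta = 0$:
$$0 = \Lie_X\theta = d(\iota_X\theta) + \iota_X d\theta = d(\theta(X)),$$
so $\theta(X)$ is constant on the connected manifold $M$.

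For the orthogonality $\theta \perp [\g,\g]$ I would evaluate the identity $\Lie_X\theta = 0$ on another fundamental field $Y$:
$$0 = (\Lie_X\theta)(Y) = X(\theta(Y)) - \theta([X,Y]_{\mathrm{vec}}).$$
The first term vanishes because $\theta(Y)$ is constant by the previous step, and the sign relation $[X,Y]_{\mathrm{vec}} = -[X,Y]_\g$ recorded just above the lemma then yields $\theta([X,Y]_\g) = 0$.

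For the final dimension count, transitivity of the $G$-action implies that fundamental vector fields span $\T_x M$ at every point, so since $\theta \not\equiv 0$ the linear form $\theta : \g \to \R$ is nonzero. Because $G$ is compact, $\g$ is reductive and decomposes as $\g = \z \oplus [\g,\g]$; the vanishing $\theta|_{[\g,\g]} = 0$ then forces $\theta|_\z \not\equiv 0$, whence $\dim \z \geq 1$. There is no real obstacle here: the only mildly subtle point is the uniqueness argument giving $G$-invariance of $\theta$, and the appeal to the reductive decomposition of a compact Lie algebra in the final step.
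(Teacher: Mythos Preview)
Your proof is correct and essentially the same as the paper's: Cartan's formula plus $d\theta=0$ gives constancy of $\theta(X)$, and then evaluating on a second fundamental field (the paper phrases this via $d\theta(X,Y)=0$ rather than $(\Lie_X\theta)(Y)=0$, which is the same computation) yields $\theta\perp[\g,\g]$. You also spell out two points the paper leaves implicit---the $G$-invariance of $\theta$ (stated in the paragraph preceding the lemma) and the reductive decomposition $\g=\z\oplus[\g,\g]$ needed for the final inequality---which is fine.
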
  
\begin{proof} Since $\theta$ is $G$-invariant and closed, the Cartan formula yields for every $X\in \g$:
$$0=\Lie_X\theta=d(X\es\theta)=d(\theta(X)).$$
Thus $\theta(X)$ is constant on $M$. Using this, we obtain that for every $X,Y\in\g$:
$$0=d\theta(X,Y)=X(\theta(Y))-Y(\theta(X))-\theta([X,Y])=-\theta([X,Y]),$$
thus showing that $\theta(Z)$ vanishes identically for every $Z\in [\g,\g]$. 
\end{proof}

A key point of our study is the following:

\begin{elem} \label{jxi} If $\dim \z\geq 2$, then $J\theta^\sharp\in\z$.
\end{elem}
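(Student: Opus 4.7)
The plan is to produce a nonzero $\xi\in\z$ with $\theta(\xi)\equiv 0$ and then to prove the pointwise identity $J\xi=c\theta^\sharp$ for a nonzero constant $c$, from which $J\theta^\sharp=-\xi/c\in\z$ follows at once. Since $\g$ is reductive (being the Lie algebra of a compact group), $\g=\z\oplus[\g,\g]$, and Lemma \ref{lemap} shows $\theta|_{[\g,\g]}\equiv 0$; thus $\theta|_\z$ is nonzero. Under the hypothesis $\dim\z\ge 2$ its kernel is at least one-dimensional, so I would pick $\xi\in\z\setminus\{0\}$ with $\theta(\xi)\equiv 0$. This $\xi$ is a holomorphic Killing vector field on $M$, pointwise orthogonal to $\theta^\sharp$.

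Next I would pass to a K\"ahler covering $\pi\colon\widetilde M\to M$ on which $\pi^*g=e^{2f}\widetilde g$ with $\widetilde g$ K\"ahler and $df=\pi^*\theta$. Because $\xi(f)=\theta(\xi)=0$, $\xi$ remains Killing for $\widetilde g$; being holomorphic and Killing on the K\"ahler manifold $(\widetilde M,\widetilde g,J)$, there exists a function $h$ on $\widetilde M$ with $J\xi=\widetilde\nabla h$. Under the deck group (on which $f$ shifts by the periods of $\theta$) the function $h$ transforms in a compensating way, so that $F:=e^{2f}h$ descends to a smooth function on $M$, and a short exterior-derivative computation gives
\[(J\xi)^\flat = dF - 2F\theta.\]

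Homogeneity now re-enters. Since $\xi$ is central, $J\xi$ is $G$-invariant, so $c:=\theta(J\xi)$ is a $G$-invariant function and hence constant by transitivity. Contracting the displayed identity with $\theta^\sharp$ and using $|\theta|=1$ gives $\theta^\sharp(F)=2F+c$; setting $F_0:=F+c/2$ yields $\theta^\sharp(F_0)=2F_0$. Simultaneously, the $G$-invariant function $d^*\theta$ must integrate to zero, so $d^*\theta\equiv 0$: $g$ is Gauduchon and $\theta^\sharp$ is divergence-free. Integrating $\theta^\sharp(F_0^2)=4F_0^2$ over the compact manifold $M$ then forces $F_0\equiv 0$, so $F$ is constant and $J\xi=c\theta^\sharp$. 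As $c=0$ would give $\xi=0$, we must have $c\ne 0$, and therefore $J\theta^\sharp=-\xi/c\in\z$.

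The main difficulty is the last step, namely forcing $F_0\equiv 0$ from $\theta^\sharp(F_0)=2F_0$: this uses compactness (for Stokes) together with homogeneity (to make both $c$ and $d^*\theta$ constant, and hence the latter zero) in an essential way. Everything else is a careful transfer of the K\"ahler data on the cover back to $M$.
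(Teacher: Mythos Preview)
Your overall strategy---produce $\xi\in\z$ with $\theta(\xi)=0$, then prove $J\xi=c\,\theta^\sharp$ pointwise---is sound, and the final integration step is clean and correct (the constancy of $c=\theta(J\xi)$ and the vanishing of $d^*\theta$ by homogeneity are fine, and $\theta^\sharp(F_0)=2F_0$ together with $\mathrm{div}\,\theta^\sharp=0$ does force $F_0\equiv 0$ on a compact manifold). The problem is the step you flag as routine: the existence of a global $F\in C^\infty(M)$ with $(J\xi)^\flat=dF-2F\theta$.

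On the universal K\"ahler cover one indeed has $h$ with $dh=(J\tilde\xi)^{\tilde\flat}$, but under a deck transformation $\gamma$ (with $\gamma^*f=f+c_\gamma$) one only gets $\gamma^*h=e^{-2c_\gamma}h+C_\gamma$ for some constant $C_\gamma$, and hence
\[
\gamma^*(e^{2f}h)=e^{2f}h+C_\gamma\,e^{2c_\gamma}e^{2f}.
\]
So $F=e^{2f}h$ descends to $M$ \emph{iff} $C_\gamma=0$ for every $\gamma$, which is exactly the statement that the class $[(J\xi)^\flat]$ vanishes in the Morse--Novikov cohomology $H^1_{d-2\theta\wedge}(M)$. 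You assert the ``compensating transformation'' but give no reason for $C_\gamma=0$; a single additive normalization of $h$ can kill $C_{\gamma_0}$ for one generator, but not for all of $\pi_1(M)$ simultaneously without further input. Nothing in your argument so far (homogeneity included) has been used to force this, and it is not automatic. As written, this is a genuine gap: the heart of the lemma is precisely that $(J\xi)^\flat$ is a constant multiple of $\theta$, and you are essentially assuming a twisted primitive exists before showing it.

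By contrast, the paper avoids covers and potentials entirely. It writes $\theta=a\,J\xi^\flat+\theta_0$ with $\theta_0\perp\{\xi,J\xi\}$, sets $f_X:=g(X,J\xi)$ for $X\in\g$, and from $d(J\xi^\flat)=-2\theta\wedge J\xi^\flat$ obtains the elementary identity
\[
df_X=2\,\theta_0(X)\,J\xi^\flat-2f_X\,\theta_0.
\]
Since $\theta_0$ has constant norm and $\theta_0(J\xi)=0$, the two $1$-forms on the right are linearly independent wherever $\theta_0\ne 0$; at any critical point of $f_X$ both coefficients must vanish, so $f_X$ vanishes at its extrema and hence everywhere, forcing $\theta_0(X)=0$ for all $X\in\g$ and thus $\theta_0\equiv 0$. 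This critical-point argument replaces your missing ``twisted exactness'' step with a direct computation using only the LCK relation and transitivity.
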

\begin{proof}
For any  $\xi \in\z$, the functions $\theta(\xi)$ and $\theta(J\xi)$ are $G$-invariant, so they are constant on $M$. From our hypothesis, there exists a non-zero $\xi\in \z$ such that $\theta(\xi)\equiv 0$. We can thus decompose
\begin{equation}\label{unu}
\theta=aJ\xi^\flat+\theta_0, \qquad a\in\RM \quad \text{and}\quad \theta_0\perp \sp\{\xi,J\xi\}.
\end{equation}
For any $X\in\g$, we consider the function $f_X:=\langle X,J\xi\rangle$. By Cartan's formula (taking into account that $\xi\es\om=J\xi^\flat$, $\Lie_XJ\xi^\flat=0$, $\Lie_X\om=0$) we derive
\begin{equation*}
\begin{split}
df_X&=d(X\es J\xi^\flat)=-X\es dJ\xi^\flat+\Lie_XJ\xi^\flat=-X\es dJ\xi^\flat=-X\es d(\xi\es\om)\\
&=-X\es(-\xi\es d\om+\Lie_\xi\om)=X\es\xi\es d\om=X\es\xi\es (-2\theta\wedge\om)\\
&=X\es\big(-2\theta(\xi)\om+2\theta\wedge J\xi^\flat\big)\\
&=-2\theta(\xi)J\xi^\flat+2\theta(X)J\xi^\flat-2\theta\langle X,J\xi\rangle\\
&=2\langle aJ\xi,X\rangle J\xi^\flat+2\theta_0(X)J\xi^\flat-2a\langle X,J\xi\rangle J\xi^\flat-2\theta_0\langle X,J\xi\rangle\qquad \text{by} \, \eqref{unu}\, \text{and}\, \theta(\xi)=0\\
&=2\theta_0(X)J\xi^\flat-2\theta_0\langle X,J\xi\rangle=2\theta_0(X)J\xi^\flat-2\theta_0 f_X.
\end{split}
\end{equation*} 
Hence we have
\begin{equation}\label{doi}
df_X=2\theta_0(X)J\xi^\flat-2\theta_0f_X
\end{equation}
Then at any critical point $x$ of $f_X$, 
$$\theta_0\res_x(X_x)J\xi^\flat_x-2\theta_0\res_xf_X(x)=0.$$
But if non-zero,  $\theta_0\res_x$ and $J\xi^\flat_x$ are linearly independent, as $\theta_0(J\xi)=0$. As $\theta_0$ is $G$-invariant, it has constant norm, and hence if $\theta_0$ is not identically zero, \eqref{doi} implies $f_X(x)=0$ for all critical points $x$.  In particular, $f_X$ vanishes at its extremal points (which exist as $M$ is compact) and then $f_X$ identically vanishes. Now \eqref{doi} implies 
$$\theta_0(X)=0\qquad \text{ for all} \,\, X\in\g,$$
and thus $\theta_0\equiv 0$ so finally $J\theta^\sharp=-a\xi\in\z$.
\end{proof}

\begin{ecor}
$\dim\z\leq 2$.
\end{ecor}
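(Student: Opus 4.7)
The plan is to reduce the problem to showing that $\ker(\theta|_\z)$ is at most one-dimensional. Once this is established, since $\theta|_\z\colon\z\to\RM$ is a linear form whose image has dimension at most $1$, the rank-nullity theorem gives $\dim\z\leq 1+\dim\ker(\theta|_\z)\leq 2$, as required. The inequality is trivial when $\dim\z\leq 1$, so I would assume $\dim\z\geq 2$ throughout.

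The core of the argument is the observation that the \emph{proof} of Lemma \ref{jxi} delivers strictly more than its statement. Specifically, starting from any nonzero $\xi\in\z$ with $\theta(\xi)=0$, the chain of computations in that proof forces $\theta_0\equiv 0$ in the decomposition \eqref{unu}, i.e.\ $\theta = aJ\xi^\flat$ for some constant $a\in\RM$. Since $M$ is assumed non-K\"ahler, $\theta$ is not identically zero, so $a\ne 0$ and hence $J\theta^\sharp = -a\xi$. Thus every nonzero element of $\ker(\theta|_\z)$ is a nonzero scalar multiple of the fixed vector $J\theta^\sharp$.

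To finish, I would remark that if $\xi_1,\xi_2\in\ker(\theta|_\z)$ are both nonzero, then each is a nonzero multiple of $J\theta^\sharp$ by the previous step, so $\xi_1$ and $\xi_2$ are proportional. This forces $\dim\ker(\theta|_\z)\leq 1$, which combined with the first paragraph yields $\dim\z\leq 2$. I do not expect any substantive obstacle here: the real work has already been done inside Lemma \ref{jxi}, and the only subtlety is to exploit the explicit equality $\theta=aJ\xi^\flat$ appearing in that proof, rather than the weaker black-box conclusion $J\theta^\sharp\in\z$.
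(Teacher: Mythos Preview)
Your proposal is correct and follows essentially the same route as the paper: view $\theta$ as a linear form on $\z$, invoke the computation inside the proof of Lemma~\ref{jxi} (not just its statement) to get $\theta=aJ\xi^\flat$ for any nonzero $\xi\in\ker(\theta|_\z)$, and conclude that this kernel is at most one-dimensional. If anything, you are slightly more careful than the paper in separating off the trivial case $\dim\z\leq 1$ and in noting explicitly that $a\ne 0$ since $\theta\ne 0$.
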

\begin{proof}
Observe that, from the above arguments, $\theta$ defines a linear form on $\z$:
$$\z\ni X\mapsto \theta(X),$$ 
For any $\xi$ in the kernel of this linear form, Lemma \ref{jxi} and \eqref{unu} imply that $\theta=aJ\xi^\flat$ for some $a\in\RM$ and hence its kernel is one-dimensional.
\end{proof}

Form now on we assume
\begin{center}
\fbox{The group $G$ has center of dimension $2$: $\dim \z=2$}
\end{center}

Note that the scalar products on $M$ of elements in $\z$ are $G$-invariant, thus constant. We choose and fix a basis $\{\xi,\eta\}$ in $\z$ orthonormal at each point of $M$, such that $\xi=J\theta^\sharp$, $\eta\perp\xi$ and we decompose
$$\eta=b\theta^\sharp+\a, \qquad \a\perp \theta^\sharp.$$
As $\eta\perp J\theta^\sharp$, we also have $\a\perp J\theta^\sharp$.

\begin{ere} {\rm $b=\theta(\eta)\neq 0$, otherwise by Lemma \ref{lemap}, $\theta$ would vanish on the whole algebra of Killing fields on $M$, which is impossible.}
\end{ere}

The following result is standard but we include a proof for convenience (homogeneity implies regularity in other geometric structures too).

\begin{elem}
The orbits of $\xi$ are closed and of constant length ({\em i.e.} the foliation induced by $\xi$ is regular).
\end{elem}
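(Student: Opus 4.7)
I would proceed in two steps. First, since $\xi\in\z$, $\mathrm{Ad}(g)\xi=\xi$ for every $g\in G$, so that the fundamental vector field induced by $\xi$ on $M=G/H$ is $G$-invariant. Combined with the transitivity of the isometric $G$-action, the function $|\xi|^2:M\to\RM$ is $G$-invariant, hence constant. (In fact $\xi^\flat=J\theta$ and $|\theta|=1$ force $|\xi|\equiv 1$, so $\xi$ is nowhere vanishing and its integral curves are parametrized by arc-length.)

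Second, the flow $\phi_t=\exp(t\xi)$ is a one-parameter subgroup of the center $Z(G)$; as $G$ is compact, the identity component $Z(G)_0$ is a torus, of dimension $\dim\z=2$. Let $T:=\overline{\{\phi_t:t\in\RM\}}\subseteq Z(G)_0$, a subtorus. Because $T$ is central in $G$ and $G$ acts transitively, the stabilizer $T_x:=\{\tau\in T:\tau\cdot x=x\}$ is independent of $x\in M$: indeed, $\tau\cdot(g\cdot x)=g\cdot(\tau\cdot x)$ shows $T_y=T_x$ whenever $y=g\cdot x$. This common stabilizer acts trivially on $M$, and the effectivity of the $G$-action forces it to be $\{e\}$. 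Hence $T$ acts freely on $M$ by isometries; all $T$-orbits are diffeomorphic to $T$ and share the same Riemannian volume, and the $\xi$-orbit through $x$ is the image of $\{\phi_t\}\subset T$ under the diffeomorphism $T\to T\cdot x$.

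The $\xi$-orbit is therefore closed in $M$ if and only if $\{\phi_t\}$ is closed in $T$, i.e.\ iff $\dim T=1$. This is the delicate point, and I expect it to be the main obstacle: one must rule out the possibility that $\{\phi_t\}$ winds densely in a $2$-dimensional subtorus of $Z(G)_0$. The natural strategy is to exploit the second central generator $\eta$, which satisfies $\theta(\eta)=b\neq 0$ while $\theta(\xi)=0$: if $\dim T=2$, then $T=Z(G)_0$ would also contain the flow of $\eta$, and the closed $G$-invariant 1-form $\theta$ would restrict on each $T$-orbit $F\simeq T^2$ to a nontrivial left-invariant form annihilating $\xi$; integrating $\theta$ over cycles compatible with the $\xi$-flow, together with the fact that $[\theta]\ne 0$ in $H^1(M,\RM)$ since $M$ is not K\"ahler, should produce the required contradiction. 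Granting $\dim T=1$, the orbits of $\xi$ are closed circles, all of the same length by the uniformity established in the second step.
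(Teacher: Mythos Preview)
Your first two steps are correct and cleanly argued: the $G$-invariance of $\xi$ (hence of $|\xi|$) and the freeness of the $T$-action obtained from effectivity are both sound, and the latter is in fact a bit more than the paper spells out. The genuine gap is exactly where you flag it yourself: you do not establish $\dim T=1$, and the strategy you sketch for it does not close. Restricting $\theta$ to a free $T$-orbit $F\simeq T^2$ indeed gives a nonzero invariant closed $1$-form with $\theta|_F(\xi)=0$ and $\theta|_F(\eta)=b\ne0$; since $\xi$ has irrational slope with respect to the lattice of $F$, the period group of $\theta|_F$ (hence of $\theta$ on $M$) contains a rank-$2$, and therefore dense, subgroup of $\RM$. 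But nothing in the LCK hypotheses forces the period group of $\theta$ to be discrete, and the non-exactness $[\theta]\ne0$ is perfectly compatible with a dense period group. So ``integrating $\theta$ over cycles'' does not, as stated, yield a contradiction.

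The paper's argument for this step is different and much more direct. If the closure of $\exp(\RM\xi)$ were the full $2$-torus $Z(G)_0$, the paper concludes that every Killing field coming from $\z$ is orthogonal to $\theta^\sharp$; combined with the earlier lemma giving $[\g,\g]\perp\theta^\sharp$ and the reductive splitting $\g=\z\oplus[\g,\g]$ of the compact Lie algebra, this forces $\theta^\sharp\perp\g$, contradicting transitivity of the $G$-action. In other words, the contradiction is produced by the previously established fact $\theta(\eta)=b\ne0$, with no cohomological input. Constant length of the orbits then follows, as you also observe, from the transitivity of $G$ on the orbit space.
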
 
\begin{proof}
If the trajectory $\exp(t\xi)$ is not closed in $G$, then the closure of $\exp(\RM\xi)$ in $G$ is a $2$-dimensional torus, and hence equal with the identity component of the center $Z(G)$. It follows that any Killing field generated from $\z$ is perpendicular on $\theta^\sharp$. But we know already that $[\g,\g]\perp\theta^\sharp$ and hence $\theta^\sharp\perp\g$, contradiction. This shows that the subgroup $\exp(\RM\xi)\subset Z(G)$ is closed, thus isomorphic to $S^1$.

The orbits of $\xi$ on $M$ are thus closed circles. Moreover they have constant length because $G$ acts transitively on the orbit space.
\end{proof}

\begin{ere}{\rm A more restrictive result was already proven by I. Vaisman in \cite{vai}: on compact homogeneous Vaisman manifolds, the foliation generated by $\theta^\sharp$ and $J\theta^\sharp$ is regular and hence the manifold fibers in 1-dimensional complex tori over a compact homogeneous K\"ahler manifold.}
\end{ere}

From the previous lemma, $N:=M/\langle\xi\rangle$ is a $\mathcal{C}^\infty$ compact manifold. Moreover, the group $G/\exp(\RM\xi)$ acts on $N$ transitively, and hence $N$ is homogeneous.

Note that any basic ({\emph{i.e.} defined on $\xi^\perp\subset \T M$) and $\Lie_\xi$-invariant tensor on $M$ descends to a tensor on $N$. We claim that the  $\Lie_\xi$-invariant endomorphism $A:=\nabla \theta^\sharp$ (symmetric, as $d\theta=0$) is basic. Indeed, since $\xi$ is Killing of unit length, we have for every vector field $X$ on $M$:
$$g(\nabla_\xi\xi,X)=-g(\nabla_X,\xi,\xi)=-\tfrac12d(|\xi|^2)(X)=0,$$
whereas by \eqref{lck}
$$\nabla_\xi \om=\theta\wedge J\xi^\flat+J\theta\wedge\xi^\flat=\theta\wedge \theta+J\theta\wedge J\theta=0,$$
whence $\nabla_\xi J=0$, and thus:
$$A(\xi)=\nabla_\xi (J\xi)=(\nabla_\xi J)(\xi)+J(\nabla_\xi \xi)=0.$$

Let us consider the following tensor fields on $M$:
\begin{itemize}
\item The 1-form of unit length $\theta=J\xi^\flat$;
\item The symmetric tensor $g-J\theta\otimes J\theta=g-\xi^\flat\otimes\xi^\flat$;
\item The 2-form $\om-\theta\wedge J\theta=\om-\xi^\flat\wedge J\xi^\flat$;
\item The symmetric endomorphism $A=\nabla \theta^\sharp$;
\item The vector field $\a=\eta-b\theta^\sharp$.
\end{itemize}
All these tensors are basic by construction and $\Lie_\xi$-invariant since $\Lie_\xi$ preserves $g,\om,J,\theta$ and $\eta$ and commutes with $\sharp$ and $\flat$. 
We call $\theta_1$, $g_1$, $\om_1$, $A_1$ and $\a_1$ their projections on $N$. As $d\theta_1=0$, the distribution $\Ker(\theta_1)$ is integrable on $N$. Note that the leafs of $\Ker(\theta_1)$ are homogeneous and compact, as the group $G/Z(G)$ acts transitively on them.

Let $P$ be a fixed maximal integral leaf of $\Ker(\theta_1)$, and let $g_2$, $\om_2$, $\a_2$ be the restrictions of $g_1$, $\om_1$, $\a_1$ to $P$ (recall that $\theta_1(\alpha_1)=\theta(\alpha)=0$, so the vector field $\alpha_1$ is tangent to $P$).

\begin{epr}\label{holo}
The manifold $(P,g_2,\om_2)$ is K\"ahler homogeneous and the vector field $\a_2$ is holomorphic.
\end{epr}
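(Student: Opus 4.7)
The plan is to build an almost complex structure $J_2$ on $P$ from $J$ via the Riemannian submersion $\pi' := \pi|_{M'}$, where $M' := \pi^{-1}(P)$ and $\pi : M \to N$ is the canonical projection; verify that $\omega_2$ is its closed fundamental form and that $J_2$ is integrable; establish transitivity of the subgroup associated to the codimension-one ideal $\g' := \{X \in \g : \theta(X) = 0\} \subset \g$; and finally show that $\alpha$ is already holomorphic on $M$ and that this holomorphicity descends to $\alpha_2$ on $P$.

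The key setup will be the following: since $d\theta = 0$, $\Ker\theta$ is an integrable distribution on $M$ and $M'$ is precisely one of its integral leaves, so $\theta|_{TM'} \equiv 0$. The map $\pi'$ is then a Riemannian submersion with vertical $\RM\xi$ and horizontal $H := \{\xi, J\xi\}^\perp \subset \Ker\theta$. Because $J$ preserves the plane $\{\xi, J\xi\}$ and hence $H$, while $\xi$ preserves $J$ and $g$, the restriction $J|_H$ is $\xi$-invariant and descends to an almost complex structure $J_2$ on $P$ compatible with $g_2$. Its fundamental 2-form coincides with $\omega_2$ since $(\pi')^*\omega_2 = (\omega - \theta \wedge J\theta)|_{TM'} = \omega|_{TM'}$ (the correction term vanishes on $TM' \subset \Ker\theta$). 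Closedness $d\omega_2 = 0$ follows from $d\omega|_{TM'} = -2(\theta\wedge\omega)|_{TM'} = 0$. For integrability of $J_2$, I plan to lift $X, Y \in T_pP$ to $\tilde X, \tilde Y \in H$; integrability of $\Ker\theta$ confines all four brackets in $N^J(\tilde X, \tilde Y) = 0$ to $TM' = \RM\xi \oplus H$, so applying $\pi'_*$ (which intertwines $J|_H$ with $J_2$ and kills the $\RM\xi$-parts) will yield $N^{J_2}(X,Y) = 0$.

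Homogeneity is derived from Lemma \ref{lemap}: since $\theta([\g,\g]) = 0$ and $\theta(\eta) = b \ne 0$, the space $\g'$ is an ideal of codimension one. The corresponding connected subgroup $G' \subset G$ preserves $\theta$, each leaf of $\Ker\theta$, and in particular $M'$, and descends to an isometric, $J_2$-preserving action on $P$. At any $x \in M'$, the fundamental fields of $\g'$ span $T_xM' = (\Ker\theta)_x$, being the preimage of this hyperplane under the surjection $\g \to T_xM$, so their projections span $T_pP$, giving transitivity.

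Finally, for $\alpha_2$ holomorphic: $\xi$ is a holomorphic Killing field (as a central element of $\g$), and since $J$ is integrable, \eqref{int} implies that $J\xi = -\theta^\sharp$, hence also $\theta^\sharp$, is holomorphic on $M$; combined with $\eta$ holomorphic and $b$ constant, this yields $\Lie_\alpha J = 0$ on $M$. To descend this to $P$, for $X \in T_pP$ with horizontal lift $\tilde X \in H$, integrability of $\Ker\theta$ places both brackets $[\alpha, \tilde X]$ and $[\alpha, J\tilde X]$ in $\RM\xi \oplus H$; decomposing the identity $[\alpha, J\tilde X] = J[\alpha, \tilde X]$ against $TM = \RM\xi \oplus \RM\theta^\sharp \oplus H$, and using that $J$ rotates the vertical $\RM\xi$-component into $\RM\theta^\sharp$ (a direction normal to $M'$), will force both vertical components to vanish and reduce the identity to $[\alpha, J\tilde X]^H = J[\alpha, \tilde X]^H$ in $H$. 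Applying $\pi'_*$ then gives $[\alpha_2, J_2 X] = J_2[\alpha_2, X]$, i.e., $\Lie_{\alpha_2} J_2 = 0$. The main subtlety is this final component-matching step, where integrability of $\Ker\theta$ must be combined with the $J$-rotation between $\RM\xi$ and $\RM\theta^\sharp$ to kill the potentially-problematic vertical parts.
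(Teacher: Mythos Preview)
Your argument is correct and complete in all essential respects, but it follows a different technical route from the paper's.

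The paper works throughout with Levi-Civita connections: it proves the K\"ahler condition by computing $(\nabla^2_X\omega_2)(Y,Z)$ via the chain $P\hookrightarrow N\leftarrow M$ and reducing to \eqref{lck}, which gives integrability and closedness simultaneously; and it proves holomorphicity of $\alpha_2$ by the covariant-derivative criterion of Lemma~\ref{hv}, reducing $\nabla^2_{J_2X}\alpha_2 = J_2\nabla^2_X\alpha_2$ to the two identities $\nabla_{JX}\theta^\sharp = J\nabla_X\theta^\sharp$ and $\nabla_{JX}\eta = J\nabla_X\eta$ on $M$. Your approach instead separates closedness ($d\omega_2=0$ from $d\omega|_{TM'}=-2\theta\wedge\omega|_{TM'}=0$) and integrability (projecting $N^J=0$), and then handles holomorphicity entirely via Lie brackets: $\Lie_\alpha J=0$ on $M$, followed by the descent argument where you exploit that the $\RM\xi$-component of $[\alpha,\tilde X]$ is rotated by $J$ into the normal direction $\RM\theta^\sharp$ and hence must vanish. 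This bracket route is a bit more elementary (no covariant derivatives are needed once the K\"ahler structure is in place) and makes the role of the foliation $\Ker\theta$ more visible; the paper's connection-based route is more uniform and yields $\nabla^2\omega_2=0$ in one stroke.

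One minor imprecision: $M'=\pi^{-1}(P)$ is in general a union of leaves of $\Ker\theta$ rather than a single leaf, but this is harmless since all you actually use is $\theta|_{TM'}\equiv 0$, which holds because $T_mM'=\pi_*^{-1}(\Ker\theta_1)_{\pi(m)}=(\Ker\theta)_m$. Your transitivity argument then goes through verbatim, as $M'$ is connected (the fibers of $\pi$ are circles) and the fundamental fields of $\g'$ span $T_mM'$ at every point.
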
   
\begin{proof}
Let $\n^1$ and $\n^2$ be the Levi-Civita connections of $g_1$ (on $N$) and $g_2$ (on $P$)  respectively. Denote by $X,Y,Z...$ vector fields on $N$ {\em which are orthogonal to $\theta_1$}, identified with their restrictions to $P$ and with their horizontal lifts to $M$. Clearly both $\theta$ and $J\theta$ vanish identically on such vector fields on $M$.

Observe that $A_1$ is precisely the second fundamental form of the isometric immersion $P\hookrightarrow N$ whose unit normal is $\theta_1^\sharp$. We then have:
\begin{equation*}
\begin{split}
\n^2_XY&=\n^1_XY-g_1(\n^1_XY,\theta_1^\sharp)\theta_1^\sharp\\
&= \n^1_XY+g_1( A_1(X),Y)\theta_1^\sharp,
\end{split}
\end{equation*}
which immediately leads to
$$(\n^2_X\om_2)(Y,Z)=(\n^1_X\om_1)(Y,Z).$$
On the other hand,
$$g_1(\n^1_XY,Z)=g(\n_XY,Z),$$
and hence
$$(\n^1_X\om_1)(Y,Z)=\n_X(\om-\theta\wedge J\theta)(Y,Z).$$
As both $\theta$ and $J\theta$ vanish identically on $X,Y,Z\in\mathcal{X}(M)$, we obtain from \eqref{lck}:
$$\n_X(\om-\theta\wedge J\theta)(Y,Z)= \left(\theta\wedge JX^\flat+J\theta\wedge X^\flat-\n_X\theta\wedge J\theta-\theta\wedge\n_XJ\theta\right)(Y,Z)=0.$$
This implies $\n^2\om_2=0$ and thus $(P,g_2,\om_2)$ is K\"ahler.

By Lemma \ref{hv}, in order to verify that $\a_2$ is holomorphic, we need to prove that
$$\n^2_{J_2X}\a_2=J_2\n^2_X\a_2.$$
A direct computation shows that for $X$ and $Y$ as before:
\begin{equation*}
\begin{split}
g_2(\n^2_X\a_2,Y)&=g_1(\n^1_X\a_1,Y)=g(\n _X\a,Y)\\
&=-bg(\n _X\theta^\sharp,Y)+ g(\n _X\eta, Y),
\end{split}
\end{equation*}
and thus it is enough to prove the following two identities:
\begin{equation*}
\n_{JX}\theta^\sharp= J\n_X\theta^\sharp,\qquad
\n_{JX}\eta=J\n_X\eta.
\end{equation*}
The first relation follows directly from the fact that $\eta\in\g$ is Killing and preserves $\om$, thus is a holomorphic vector field on $M$: $\Lie_\eta J=0$. 
The same argument shows that $\xi$ is holomorphic, which by \eqref{int} implies $\Lie_{J\xi}J=0$ since $J$ is integrable. 
As $\theta^\sharp=J\xi$, this proves the second relation.
 \end{proof}

The next result is perhaps well known, but we provide a proof for the reader's convenience.

\begin{elem}\label{hvf}
Let $K$ be a Lie group acting effectively and transitively by holomorphic isometries on a compact K\"ahler manifold $(P,h,J)$. Then every $K$-invariant holomorphic vector field $\zeta$ on $P$ is parallel with respect to the Levi-Civita connection of $h$.
\end{elem}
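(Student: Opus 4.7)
The plan is to show that $\nabla\zeta$, viewed as an endomorphism of $\T P$, is simultaneously symmetric and antisymmetric, and therefore vanishes. The symmetric half will come from $d\zeta^\flat=0$, and the antisymmetric half from the fact that $\zeta$ is a Killing vector field.

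The key step is to prove $d\zeta^\flat=0$. Using Lemma \ref{hv}, one checks directly that $d\zeta^\flat(JX,JY)=d\zeta^\flat(X,Y)$, so $d\zeta^\flat$ is a $J$-invariant real $2$-form, hence of type $(1,1)$. Being moreover exact, the global $\partial\bar\partial$-lemma on the compact K\"ahler manifold $P$ provides a real function $f$ with $d\zeta^\flat=\sqrt{-1}\,\partial\bar\partial f$. Since $d\zeta^\flat$ is $K$-invariant and $K$ acts holomorphically, averaging $f$ over $K$ (with normalized Haar measure) produces a $K$-invariant potential $\tilde f$ still satisfying $d\zeta^\flat=\sqrt{-1}\,\partial\bar\partial \tilde f$; by transitivity of $K$ the function $\tilde f$ is constant, and therefore $d\zeta^\flat=0$. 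Translating, $g(\nabla_X\zeta,Y)=g(\nabla_Y\zeta,X)$ for all $X,Y$.

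Next, I would apply the very same argument to $J\zeta$, which is $K$-invariant (because $J$ is) and holomorphic by \eqref{int}, in order to conclude $d((J\zeta)^\flat)=0$. Since $(J\zeta)^\flat=\zeta\es\omega$ and $d\omega=0$, Cartan's formula yields $\Lie_\zeta\omega=d(\zeta\es\omega)=0$. Combined with $\Lie_\zeta J=0$ and the identity $g(X,Y)=\omega(X,JY)$, this gives $\Lie_\zeta g=0$, so $\zeta$ is a Killing vector field and $\nabla\zeta$ is antisymmetric.

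Combining the two conclusions, $\nabla\zeta$ is both symmetric and antisymmetric, hence $\nabla\zeta=0$. The main obstacle is the first step: it rests on the $\partial\bar\partial$-lemma for compact K\"ahler manifolds together with the $K$-averaging argument that promotes the $K$-invariance of the $(1,1)$-form $d\zeta^\flat$ to $K$-invariance of its potential $f$.
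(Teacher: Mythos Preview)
Your argument is correct and genuinely different from the paper's. The paper invokes Matsushima's classification of compact homogeneous K\"ahler manifolds to write (up to finite cover) $P\cong T^{2k}\times K_0/H_0$ with $K_0$ semi-simple and $\rk H_0=\rk K_0$; it then kills the $K_0/H_0$-component of $\zeta$ by an Euler-characteristic argument and handles the torus part using the fact that holomorphic vector fields on compact Ricci-flat K\"ahler manifolds are parallel. Your route avoids the classification entirely: you observe that $d\zeta^\flat$ is a $K$-invariant exact real $(1,1)$-form, apply the $\partial\bar\partial$-lemma to obtain a potential, average it to a constant, and conclude $d\zeta^\flat=0$; repeating this for $J\zeta$ gives $\Lie_\zeta\om=0$, hence $\zeta$ is Killing, and the symmetric-plus-skew argument finishes. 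What your approach buys is independence from the structure theorem and a cleaner logical input (Hodge theory rather than classification); what the paper's approach buys is an explicit geometric picture of where such $\zeta$ can live.

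One small point worth making explicit: your averaging step uses a normalized Haar measure on $K$, so you are implicitly assuming $K$ is compact. The lemma as stated only says $K$ is a Lie group acting effectively by isometries on the compact manifold $P$. This is harmless: the group of holomorphic isometries of $(P,h,J)$ is a closed subgroup of the compact group $\mathrm{Isom}(P,h)$, hence compact, and you may replace $K$ by its closure $\bar K$ therein; $\bar K$ still acts effectively and transitively by holomorphic isometries, and $\zeta$ remains $\bar K$-invariant by continuity. (The paper's proof makes the same tacit assumption when invoking the classification.)
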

\begin{proof}
The classification of compact homogeneous K\"ahler manifolds (see \cite{mat}) shows that up to a finite covering we can assume that $K=T^{2k}\times K_0$ and $P$ is isometric with a Riemannian product $T^{2k}\times (K_0/H_0)$, where $T^{2k}$ is a flat torus, $K_0$ is semi-simple and $\rk\, H_0 =\rk\, K_0$. Correspondingly, the tangent bundle of $P$ splits as a direct sum $\T P=\T _1P\oplus \T _2P$ of parallel $J$-invariant distributions. By Lemma \ref{hv}, the projections $\zeta_1$ and $\zeta_2$ of $\zeta$ on these distributions are both holomorphic vector fields on $P$. Of course, $\zeta_2$ is still $K_0$-invariant, so it has constant length. On the other hand, the Euler characteristic $\chi(G_0/H_0)$ equals the ratio of the cardinals of the Weyl groups of $K_0$ and $H_0$ (cf. \cite{adams}), so in particular it is non-vanishing. Consequently, a vector field of constant length on $G_0/H_0$ has to vanish. Thus $\zeta_2=0$.

Now, the restriction of $\zeta_1$ to any leaf $T^{2k}\times\{x_0\}$ is a holomorphic vector field on the flat torus. It is well known that on compact Ricci-flat K\"ahler manifolds, any holomorphic vector field is parallel (see {\em e.g.} \cite[Thm. 20.5]{kg}). Thus $\zeta_1$ is parallel in $\T _1$-directions. On the other hand, if $X_1$ and $X_2$ are any Killing vector fields on $P$ defined by some elements in the Lie algebras of $\t$ of $T^{2k}$ and $\k_0$ of $K_0$ respectively, then $X_1$ is clearly parallel on $P$ so 
$$0=\nabla_{X_2}X_1=\Lie_{X_2}X_1+\nabla_{X_1}X_2=\nabla_{X_1}X_2.$$
This shows that $\nabla_{X_1}X_2=0$ for every $X_1\in \T _1$ and for every $X_2\in \k_0$. In particular we get for every $X_2\in \k_0$:
$$0=\nabla_{\zeta_1}X_2=\Lie_{\zeta_1}X_2+\nabla_{X_2}\zeta_1=\nabla_{X_2}\zeta_1.$$
Since the vector fields in $\k_0$ span the distribution $\T _2$ at each point, this eventually shows that $\zeta=\zeta_1$ is parallel on $P$.
\end{proof} 

We are now in a position to prove our main result:

\begin{ath}\label{v}
A compact homogeneous locally conformally K\"ahler manifold $G/H$, with $\dim \z\geq 2$, is Vaisman.
\end{ath}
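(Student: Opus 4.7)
The goal is to prove that $A:=\n\theta^\sharp=0$, which is equivalent to $\theta$ being parallel. The groundwork preceding Proposition \ref{holo} already yields $A(\xi)=0$; moreover $A$ is symmetric (since $d\theta=0$) and $A(X)\perp\theta^\sharp$ for every $X$ (since $|\theta^\sharp|\equiv 1$). It is therefore enough to prove that $g(A(X),Y)=0$ whenever $X,Y\in\T M$ are both orthogonal to $\xi$ and $\theta^\sharp$: combined with the symmetry of $A$ and $A(\xi)=0$, this forces $A(X)=0$ on $\sp\{\xi,\theta^\sharp\}^\perp$, and then $A(\theta^\sharp)=0$ as well by symmetry, yielding $A\equiv 0$.

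The central input is Lemma \ref{hvf} applied to the compact homogeneous K\"ahler manifold $(P,g_2,\om_2)$ of Proposition \ref{holo} with the holomorphic vector field $\a_2$. Since $\a=\eta-b\theta^\sharp$ is built from $G$-invariant data, $\a_2$ is invariant under the transitive isometric action inherited on $P$, and Lemma \ref{hvf} gives $\n^2\a_2=0$. The Gauss formula for the isometric hypersurface $P\hookrightarrow N$, whose unit normal is $\theta_1^\sharp$ and whose shape operator is determined by $A_1$, then yields $\n^1_X\a_1\in\RM\theta_1^\sharp$ for every $X\in\T P$. Lifting through the Riemannian submersion $\pi:M\to N$ whose fibers are the unit-length Killing $\xi$-orbits, the horizontal part of $\n_X\a$ inherits the same direction (along $\theta^\sharp$) while the vertical part lies along $\xi$. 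Pairing with $Y\perp\xi,\theta^\sharp$ thus yields the key identity
$$g(\n_X\a,Y)=0\qquad\text{for all }X,Y\in\T M\text{ with } X,Y\perp\xi,\theta^\sharp.$$

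The conclusion is then algebraic. Expanding $\a=\eta-b\theta^\sharp$, the identity becomes
$$0=g(\n_X\eta,Y)-bg(A(X),Y).$$
The first term is skew-symmetric in $(X,Y)$ because $\eta$ is Killing, while the second is symmetric because $A$ is symmetric; hence both must vanish separately. This yields $g(A(X),Y)=0$ for $X,Y\perp\xi,\theta^\sharp$, completing the argument via the reduction of the first paragraph. The main technical care is required in the middle step, where one must verify that the O'Neill correction term for $\pi$ lies along $\xi$ and so disappears when paired with $Y\perp\xi$, allowing the Gauss identity on $P\subset N$ to transfer cleanly from the leaf back to the total space $M$.
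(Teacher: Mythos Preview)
Your proposal is correct and follows the same line as the paper's own proof: apply Lemma~\ref{hvf} to $\alpha_2$ on the homogeneous K\"ahler leaf $P$, transfer $\nabla^2\alpha_2=0$ back to $M$ to obtain $g(\nabla_X\alpha,Y)=0$ for $X,Y\perp\sp\{\theta^\sharp,J\theta^\sharp\}$, and then use the skew/symmetric splitting of $g(\nabla_X\eta,Y)=bg(A(X),Y)$ (with $b\ne 0$) to kill the symmetric part. The only notable difference is in the final assembly: the paper invokes the holomorphicity of $\theta^\sharp$ to show that $A$ commutes with $J$ and hence $A(J\theta^\sharp)=JA(\theta^\sharp)=0$, whereas you bypass this by directly citing the identity $A(\xi)=0$ already established before Proposition~\ref{holo} (recall $\xi=J\theta^\sharp$), which is a small but legitimate shortcut.
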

\begin{proof}

From Proposition \ref{holo}, we know that $\a_2$ is a $G/Z(G)$-invariant holomorphic vector field on the compact, K\"ahler manifold $P$ on which the group $G/Z(G)$ acts effectively and transitively by holomorphic isometries.  By Lemma \ref{hvf}, $\n^2\a_2=0$ and thus, for any $X,Y\perp \sp\{\theta^\sharp,J\theta^\sharp\}$, we obtain (with same type of computations as above) $\langle \n_X\a,Y\rangle=0$. From the definition of $\alpha$ we infer 
$$g(\n_X\eta,Y)=bg(\n_X\theta^\sharp,Y).$$
As the left hand side of the above identity is skew-symmetric in $X,Y$, while the right hand side is symmetric, both should vanish identically.
Since $b\neq 0$, this implies 
\begin{equation}\label{trei}
g(\n_X\theta^\sharp,Y)=0 \quad \text{for all}\,\,  X,Y\perp \sp\{\theta^\sharp,J\theta^\sharp\}. 
\end{equation}
Recall now that $\theta^\sharp$ is a holomorphic vector field (since $J\theta^\sharp\in\g$), so by Lemma  \ref{hv} we have $\n_{JX}\theta^\sharp=J\n_X\theta^\sharp$. This implies that the symmetric endomorphism $A:=\n\theta^\sharp$ commutes with $J$.

On the other hand, for every vector field $X$ on $M$ we have
$$g(A(\theta^\sharp),X)=g(A(X),\theta^\sharp)=g(\n_X{\theta^\sharp},\theta^\sharp)=\frac 12 d|\theta^\sharp|^2(X)=0,$$
which yields 
\begin{equation}\label{patru}
A(\theta^\sharp)=0\qquad \hbox{and thus}\quad A(J\theta^\sharp)=J(A(\theta^\sharp))=0.
\end{equation}
Equations \eqref{trei} and \eqref{patru} imply $A\equiv 0$ and hence $\n\theta=0$ as claimed.
\end{proof} 

 
As a final remark, we recall that Kokarev  \cite{kk} introduced the class of {\em pluricanonical} LCK manifolds, characterized by the condition $(\n\theta)^{1,1}=0$ (which is weaker than the Vaisman condition $\n\theta=0$).
Very recently, P. Gauduchon proved \cite{paul} that compact homogeneous pluricanonical LCK manifolds are Vaisman, which, together with our Theorem \ref{v}, provides some further evidence in favor of the conjecture that {\em compact homogeneous LCK manifolds are Vaisman}. We note that this is the content of  Theorem 2 in \cite{haka},  but it seems that the proof therein is still incomplete.

\end{document}